\newtheorem{theorem}{Theorem}
\newtheorem{lemma}[theorem]{Lemma}
\newtheorem{proposition}[theorem]{Proposition}
\theoremstyle{definition}
\newtheorem{definition}[theorem]{Definition}
\newtheorem{example}[theorem]{Example}
\newtheorem{remark}[theorem]{Remark}
\newtheorem{question}[theorem]{Question}
\newcommand{\C}{\mathbb{C}}
\newcommand{\N}{\mathbb{N}}
\newcommand{\R}{\mathbb{R}}
\newcommand{\Conv}{{\mathrm{Conv}}}
\newcommand{\Conf}{{\mathrm{Conf}}}
\newcommand{\UConf}{{\mathrm{UConf}}}
\newcommand{\Symm}{{\mathrm{Symm}}}
\newcommand{\cP}{\mathcal{P}}
\newcommand\dist{\mathrm{dist}}
\newcommand{\PO}{P^{Oka}}
\newcommand{\CP}{\mathbb{CP}}
\def\bar{\overline}
\numberwithin{equation}{section}
\begin{document}
\title[Polynomial convexity]
{Polynomial convexity with degree bounds}
\author[Marko Slapar]{Marko Slapar}
\address{Faculty of mathematics and physics\\University of
  Ljubljana\\Jadranska 19\\1000
  Ljubljana, Slovenia\\ \newline
  Faculty of education\\University of
  Ljubljana\\Kardeljeva plo\v s\v cad 16\\1000
  Ljubljana, Slovenia\\ \newline
  Institute of Mathematics, Physics and Mechanics\\Jadranska
  19\\1000 Ljubljana, Slovenia}
\email{marko.slapar@fmf.uni-lj.si}

\thanks{Supported by the research program P1-0291 and research projects J1-3005 and N1-0237 at the Slovenian Research and Innovation Agency.}

%
%
\subjclass[2020]{32E20, 30E10, 30C10}
\date{\today} 
\keywords{polynomial convexity, lemniscate, configurations of points}

\begin{abstract}  We introduce different notions of polynomial convexity with bounds on degrees of polynomials in $\C^n$. We provide some examples in higher dimensions and show necessary and sufficient conditions for polynomial convexity with degree bounds for certain sets of points in $\C$ and for certain arcs in the unit circle.
\end{abstract}
\maketitle

\section{Introduction} 

\noindent For a compact set $K\subset \C^n$, its polynomial
  hull is defined as \[\cP(K)=\{z\in\C^n; |P(z)|\le \|P\|_K \text{
    for all polynomials }P\}.\] The set is called polynomially
  convex, if $P(K)=K$. We can characterise polynomial convexity in the following way. Let
$F:[s,\infty)\times \C^n\to\C$ be a continuous map, such that for
every $t\in [s,\infty)$, the map $$F_t=F(t,\cdot)\!:\C^n\to\C$$ is a
polynomial, and let for every $t\in [s,\infty)$ the $$H_t=\{z\in\C^n;
F_t(z)=0\}$$ be the zero set of $F_t$.  We call the map $t\mapsto H_t$
the \textit{curve of algebraic hypersurfaces} in $\C^n$. If
$$\lim_{t\to\infty}\dist(0,H_t)=\infty,$$ we say that $H_t$
\textit{tends to infinity}. The following result was proved by
Stolzenberg (\cite{St}) and is often referred as the Oka
characterisation of polynomial hulls.
\begin{proposition}[\cite{St}, \&1] A compact set $K\subset\C^n$ is polynomially convex, if and only if, for every $z_0\in\C^n\backslash K$, there exists a curve of algebraic surfaces $H_t$, $t\ge s$, that tends to infinity, so that $z_0\in H_s$ and $H_t\cap K=\emptyset$ for every $t\ge s $.
\end{proposition}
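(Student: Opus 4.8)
The plan is to prove the two implications separately: the first is a short explicit construction, while the second carries the real content.

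\emph{Necessity.} Assume $K$ is polynomially convex and fix $z_0\in\C^n\setminus K=\C^n\setminus\cP(K)$. By the definition of the polynomial hull there is a polynomial $P$ with $|P(z_0)|>\|P\|_K$; in particular $P(z_0)\ne0$, so $Q:=P/P(z_0)$ satisfies $Q(z_0)=1$ and $r:=\|Q\|_K<1$. I would then take $s:=1$ and put $F_t(z):=Q(z)-t$ for $t\ge1$. This is continuous in $(t,z)$ and a polynomial in $z$; $z_0\in H_1$ since $F_1(z_0)=0$; and $H_t\cap K=\emptyset$ for every $t\ge1$ since $|Q(z)|\le r<1\le t$ on $K$. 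The only point needing a word is that $H_t$ tends to infinity: given $\rho>0$, the polynomial $Q$ is bounded by some $M_\rho$ on $\overline{B_\rho(0)}$, so as soon as $t>M_\rho$ the equation $Q(z)=t$ has no solution in $\overline{B_\rho(0)}$, whence $\dist(0,H_t)\ge\rho$.

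\emph{Sufficiency.} Suppose a curve as in the statement exists for every point outside $K$, and assume, towards a contradiction, that $\cP(K)\ne K$; fix $z_0\in\cP(K)\setminus K$ and a curve $H_t=\{F_t=0\}$, $t\ge s$, with $z_0\in H_s$, $H_t\cap K=\emptyset$ for all $t\ge s$, and $\dist(0,H_t)\to\infty$. The key idea is to track not the point $z_0$ but the last parameter at which the moving hypersurface still meets the hull: set
$$T:=\sup\{t\ge s:\ H_t\cap\cP(K)\ne\emptyset\}.$$
Since $\cP(K)$ is compact and contains $z_0\in H_s$, this set is nonempty; since it is bounded and $\dist(0,H_t)\to\infty$, it is bounded above, so $s\le T<\infty$. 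A routine compactness argument — pick $t_k\uparrow T$ and $z_k\in\cP(K)\cap H_{t_k}$, pass to a limit $z_k\to z_*$, and use continuity of $F$ — shows the supremum is attained, so there is $z_*\in\cP(K)\cap H_T$; and since $H_T\cap K=\emptyset$ we have $z_*\notin K$, so $m:=\min_K|F_T|>0$.

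Now, for each $t>T$ the polynomial $F_t$ has no zero on the compact set $\cP(K)$ (which is again polynomially convex), hence none on a neighborhood of it, so $1/F_t$ is holomorphic near $\cP(K)$. Invoking the Oka--Weil theorem, $1/F_t$ is a uniform limit on $\cP(K)$ of polynomials $g_n$. Since $z_*\in\cP(K)$ gives $|g_n(z_*)|\le\|g_n\|_K$, letting $n\to\infty$ yields $|1/F_t(z_*)|\le\|1/F_t\|_K=1/\min_K|F_t|$, i.e.
$$|F_t(z_*)|\ \ge\ \min_K|F_t|\qquad\text{for all }t>T.$$
Letting $t\downarrow T$, the left side tends to $|F_T(z_*)|=0$ while the right side tends to $\min_K|F_T|=m>0$ (the map $t\mapsto\min_K|F_t|$ is continuous because $F$ is continuous and $K$ is compact), a contradiction. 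The main obstacle is precisely this choice of object to track: arguing directly with $K$ leads nowhere, whereas defining $T$ through the hull $\cP(K)$ and then transporting the maximum-principle estimate $|h(z_*)|\le\|h\|_K$ to $h=1/F_t$ via Oka--Weil settles the claim in all dimensions at once.
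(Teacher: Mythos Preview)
Your proof is correct and follows essentially the same approach as the paper's (brief) sketch: the necessity via level sets of a normalized separating polynomial matches the paper exactly, and for sufficiency the paper only says that ``the other direction requires approximating $1/F_t$ on $\cP(K)$ by polynomials,'' which is precisely the Oka--Weil step you carry out in full, together with the standard last-touching-time argument $T=\sup\{t:H_t\cap\cP(K)\ne\emptyset\}$ to locate the critical parameter.
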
  

\noindent The proof in one direction is immediate. If  $z_0\in \C^n\backslash\cP(K)$
and  $P:\C^n\to\C$ is such that $P(z_0)=1$ and $\|P\|_K<1$, then
$H_t=\{P(z)=t\}$, $t\ge 1$, gives a curve of algebraic surfaces in the
complement of $K$, that tends to infinity, and $z_{0}\in H_0$. The other
direction requires approximating $1/F_t$ on $\cP(K)$ by polynomials
and does not give such a clear correspondence between $F_t$ and the
polynomial $P$ that separates $z_0$ from $\cP(K)$. Notice that one can
already take the curve of algebraic surfaces to be the real level sets of
a single polynomial.  

By restricting the degrees of the polynomial, this motivates us to
define the following polynomial hulls with restricted degrees.

\begin{definition} Let $K\subset\C^n$ be a compact set and let
  $d\in\N\cup\{\infty\}$.
    \begin{itemize}[leftmargin=*]
\item   The \textit{d-polynomial hull} of $K$ is the set
  \[\cP_d(K)=\{z\in\C^n; |P(z)|\le \|P\|_K \text{
      for all polynomials }p \text{ with }\deg P\le d\}.\] A compact
  set $K\subset\C^n$ is\textit{ d-polynomially convex} if $\cP_d(K)=K$.
 \item The \textit{Oka d-polynomial hull} of $K$ is the set
  $\cP^g_d(K)$ containing all the points $z_0\in\C^n$, such that every curve
  of algebraic surfaces $H_t$, $t\ge s$, that tends to infinity, with
  $z_0\in V_0$ and $\deg F_t\le d$, where $F:[s,\infty)\times
  \C^n\to\C$ is the function defining the curve of algebraic surfaces,
  has a nonempty intersection with $K$.  A compact
  set $K\subset\C^n$ is\textit{ Oka d-polynomially convex} if $\cP^{Oka}_d(K)=K$.
\item The \textit{geometric d-polynomial hull} of $K$ is the set
  $\cP^g_d(K)$ containing all the points $z_0\in\C^n$, such that for
  every  polynomial $P$, $\deg P\le d$ and $P(z_0)=0$, there exists a $\lambda\ge
  0$, so that $\{P(z)=\lambda\}\cap K\neq\emptyset$. A compact
  set $K\subset\C^n$ is\textit{ geometrically d-polynomially convex}
  if $\cP^g_d(K)=K$
\end{itemize}
\end{definition}
\noindent Stolzenberg's result shows that $$\cP(K)=\cP_\infty(K)=\cP^g_{\infty}(K)=\cP^{Oka}_\infty (K),$$ and from
the proof of his result, we clearly see that \[\cP^{Oka}(K)\subseteq\cP_d^g(K)\subseteq \cP_d(K)\] for
every $d\in\N$. It is also obvious that for $d_1\le d_2$, we have
inclusions
\[\cP_{d_2}(K)\subseteq \cP_{d_1}(K), \cP_{d_2}^g(K)\subseteq
 \cP^g_{d_1}(K),  \cP_{d_2}^{Oka}(K)\subseteq \cP^{Oka}_{d_1}(K)\] and
 that 
\[\cP(K)=\bigcap_{d\in\N}P_d(K)=\bigcap_{p\in\N}\cP_d^g(K)=\bigcap_{d\in\N}\cP_d^{Oka}(K)\]
holds.
We will see from examples below, that in general, inclusions are
strict. 
\begin{remark}In any dimension, polynomial $1$-convexity is just the standard convexity, while Oka $1$-polynomial convexity is harder to characterise.  A set is $K\subset \C^n\subset \CP^n$ is Oka $1$-polynomially convex, if it is linearly convex and the set $K^*\subset\CP^{n*}$, defined as the set of hyperplanes in $\CP^n$ not intersecting $K$,  is the convex hull of a connected set \cite[Remark 2.1.10]{APS}. 
\end{remark}
\begin{remark}When $n=1$, it is clear that if $K\in\C$ is
already polynomially convex, then $P^{Oka}_1(K)=K$, since polynomial convexity in dimension one is equivalent to the complement of $K$ being path connected, and that for general compact sets $K\subset \C$, $\PO_1(K)=\C\backslash V$, where $V$  is the unbounded connected component of $\C\backslash K$. 
One usually needs polynomials of much higher
degrees to test the polynomial convexity with the other two definitions.
\end{remark} 

The paper is organised as follows. In the next section, we give some examples to illustrate the problem in dimension $2$. In the remaining of the paper,  we focus on dimension one, where characterising $d$-polynomial convexity already turns out to be a difficult problem. The arguments in the paper are mostly elementary. 

\section{Results in $\C^2$}

\noindent Compact subsets in a totally real subspace of $\C^2$ are always
polynomially convex and the following proposition shows that one can
already conclude that by using polynomials of degree $2$.

\begin{proposition} Let $K\subset \R^2\subset\C^2$ be a compact subset. Then
  for every $(z_0,w_0) \in\C^2\backslash K$, there exist a polynomial $P(z,w)$
  of degree two, so that $P(z_0,w_0)=1$ and $\|P\|_K<1.$ So $K=\cP(K)=\cP_2(K)$.
  \end{proposition}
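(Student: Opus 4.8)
\emph{Proof proposal.} The plan is to exhibit, for each $(z_0,w_0)\in\C^2\setminus K$, a single explicit polynomial of degree two that separates $(z_0,w_0)$ from $K$, built from a scaled and shifted copy of the ``squared distance'' $Q(z,w)=(z-a)^2+(w-b)^2$. Once such separating polynomials exist the statement follows immediately: they give $\cP_2(K)\subseteq K$, while $K\subseteq\cP(K)\subseteq\cP_2(K)$ holds in general (by the inclusions recorded in the introduction), so $K=\cP(K)=\cP_2(K)$. Throughout I may assume $K\neq\emptyset$, the empty case being trivial.

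Fix $(z_0,w_0)\notin K$. I would put $a=\Real z_0$, $b=\Real w_0$, and set
\[
Q(z,w)=(z-a)^2+(w-b)^2,\qquad q_0:=Q(z_0,w_0)=-(\mathrm{Im}\,z_0)^2-(\mathrm{Im}\,w_0)^2\le 0 .
\]
Since $K\subset\R^2$, on $K$ the polynomial $Q$ is real and equals $(x_1-a)^2+(x_2-b)^2\ge 0$; by compactness it attains there a minimum $m\ge 0$ (namely $m=\dist((a,b),K)^2$) and a maximum $R^2<\infty$. The one point that needs checking — the crux of the argument, though an easy one — is the strict inequality $q_0<m$: if instead $m+(\mathrm{Im}\,z_0)^2+(\mathrm{Im}\,w_0)^2=0$, then $m=0$ forces $(a,b)\in K$ and $\mathrm{Im}\,z_0=\mathrm{Im}\,w_0=0$ forces $(z_0,w_0)=(a,b)$, so $(z_0,w_0)\in K$, contradicting the choice of $(z_0,w_0)$.

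With $q_0<m\le R^2$ in hand I would take
\[
P(z,w)=\frac{\tfrac12(R^2+m)-Q(z,w)}{\tfrac12(R^2+m)-q_0}.
\]
Its denominator equals $\tfrac12(R^2+m-2q_0)>\tfrac12(R^2-m)\ge 0$, so $P$ is a well-defined polynomial of degree exactly two (its quadratic part is a nonzero multiple of $z^2+w^2$). By construction $P(z_0,w_0)=1$, and on $K$ one has $Q\in[m,R^2]$, hence $\tfrac12(R^2+m)-Q\in[-\tfrac12(R^2-m),\tfrac12(R^2-m)]$, so that
\[
\|P\|_K\le\frac{R^2-m}{R^2+m-2q_0}<1,
\]
the final inequality being again equivalent to $q_0<m$. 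Thus $|P(z_0,w_0)|=1>\|P\|_K$, i.e. $(z_0,w_0)\notin\cP_2(K)$; since $(z_0,w_0)$ was an arbitrary point of $\C^2\setminus K$, we conclude $\cP_2(K)=K$ and hence $K=\cP(K)=\cP_2(K)$.

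I do not anticipate a real obstacle here: the whole argument is elementary, and the only step demanding care is the verification that $q_0<m$ holds exactly because $(z_0,w_0)\notin K$. Degenerate configurations — $K$ a single point, or $K$ lying on a circle about $(a,b)$, where $R^2=m$ — require no separate treatment, since then the same $P$ is simply the constant $0$ on $K$.
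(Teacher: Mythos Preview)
Your proof is correct. The key observation---that centering the quadratic $Q(z,w)=(z-a)^2+(w-b)^2$ at the real part $(a,b)=(\Real z_0,\Real w_0)$ forces $Q(z_0,w_0)$ to be real and nonpositive, while $Q$ is nonnegative on $K$---is exactly what makes a single affine renormalisation of $Q$ separate $(z_0,w_0)$ from $K$. All the inequalities check out, and the degenerate case $R^2=m$ is absorbed without extra work.

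The paper's proof uses essentially the same quadratic form but in different clothing: it first applies the linear change $(z,w)\mapsto(z+iw,z-iw)$ sending $\R^2$ to $V=\{(\zeta,\bar\zeta)\}$, after which $z^2+w^2$ becomes $zw$ and restricts to $|\zeta|^2$ on $V$. It then builds separating polynomials of the form $1+\frac{i}{m}(zw-c)$ or $1-\frac{1}{2M}(z-z_0)(w-w_0)$, but this forces a case split according to whether $(z_0,w_0)\in V$ and whether $\mathrm{Im}(z_0w_0)=0$. Your centering trick sidesteps all of that: by choosing $a,b$ so that $q_0$ is automatically real, one formula covers every case uniformly. So your argument is a cleaner packaging of the same idea.
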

  \begin{proof} Using the linear map $F(z,w)=(z+iw,z-iw)$, we can assume
    that $K\subset V$, where $V$ is the totally real plane
    $V=\{(\zeta,\bar\zeta);\zeta\in\C\}$.  Let
    $(z_0,w_0)\in\C\backslash K$ and let \[M=\max
    \{|(z-z_0)(w-w_0)|;(z,w)\in K\}.\]
    Let first 
    $(z_0,w_0)\in\C\backslash V$. Let $z_0w_0=a+ib$. If $b\ne 0$,  we set
    \[P(z,w)=1+\frac{i}{m}(zw-z_0w_0),\]
      with $m\in\R$, $m\ne 0$, to be determined.
    Then $P(z_0,w_0)=1$ and
    \[|p(\zeta,\bar\zeta)|^2=(1+\frac{b}{m})^2+\frac{1}{m^2}(|\zeta|^2-a)^2.\]
    On $K$, $|P(z,w)|<1$, as long as
    \[-2bm>b^2+(|\zeta|^2-a)^2)\]
 for $(\zeta,\bar\zeta)\in K$, which is equivalent to $-2bm>M$. 
 
 \noindent If $b=0$, we can set 
 \[P(z,w)=1+\frac{i}{m}(zw-iz_0w_0).\] 
 Again $P(z_0,w_0)=1$ and 
  \[|p(\zeta,\bar\zeta)|^2=(1+\frac{a}{m})^2+\frac{1}{m^2} |\zeta|^4.\]
  
  \noindent On $K$, $|P(z,w)|<1$, as long as
    \[-2am>a^2+|\zeta|^4\]
 for $(\zeta,\bar\zeta)\in K$. 
 
 \noindent If $(z_0,w_0)\in V\backslash K$, we can set
 \[P(z,w)=1-\frac{1}{2M}(z-z_0)(w-w_0).\]
    \end{proof}

As expected, the same does not hold for general compact sets in
$\C^2$, even if we restrict to simple curves  in the standard
Lagrangian torus. 
    
\begin{proposition} Let $p,q\in\N$, $D(p,q)=1$, and $K=\{(e^{ip\theta},e^{-iq\theta};0\le\theta\le2\pi\}\subset\C^2$ be the $(p,-q)-$torus knot. Then every polynomial $P(z,w)$, with
$P(0,0)=1$ and $\|p\|_K<1$,  must be of degree at least
$p+q$. Furthermore  $\cP_{p+q}(K)=K$ and for $d<p+q$,
  $\cP_d(K)$ strictly contains $K$.
  \end{proposition}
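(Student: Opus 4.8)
The plan is to build everything around the single monomial $Q(z,w)=z^qw^p$: it has total degree $p+q$, it is identically $1$ on $K$ (because $z^qw^p=e^{ipq\theta}e^{-ipq\theta}=1$ along the knot), and, as noted below, $Q-1$ cuts $K$ out of the closed bidisc.

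\textbf{Lower bound on the degree, and strict containment for $d<p+q$.} Write a general polynomial as $P=\sum_{j+k\le d}c_{jk}z^jw^k$ with $d=\deg P$, and restrict it to $K$: the function $\theta\mapsto P(e^{ip\theta},e^{-iq\theta})=\sum_{j+k\le d}c_{jk}e^{i(pj-qk)\theta}$ is a trigonometric polynomial whose mean value over $[0,2\pi]$ equals $\sum_{pj=qk}c_{jk}$. Since $\gcd(p,q)=1$, the equation $pj=qk$ with $j,k\ge 0$ forces $(j,k)=t(q,p)$ with $t$ a nonnegative integer, so $j+k=(p+q)t$; hence if $d<p+q$ the only surviving term is $t=0$ and the mean value of $P|_K$ is exactly $c_{00}=P(0,0)$. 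Because the modulus of a mean value never exceeds the sup-norm, this yields $|P(0,0)|\le\|P\|_K$ for every polynomial of degree $<p+q$. Consequently a polynomial with $P(0,0)=1$ and $\|P\|_K<1$ cannot have degree $<p+q$, which is the first claim; and since the same inequality holds for all $\deg P\le d$ whenever $d<p+q$, it says precisely that $(0,0)\in\cP_d(K)$, so (as $(0,0)\notin K$ and always $K\subseteq\cP_d(K)$) we get $\cP_d(K)\supsetneq K$.

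\textbf{The equality $\cP_{p+q}(K)=K$.} First I would record that $K=\{z^qw^p=1\}\cap\overline{\D}^2$: if $z_0^qw_0^p=1$ with $|z_0|,|w_0|\le 1$ then $|z_0|^q|w_0|^p=1$ forces $|z_0|=|w_0|=1$, and writing $z_0=e^{i\alpha}$, $w_0=e^{i\beta}$ the relation $q\alpha+p\beta\in 2\pi\Z$ can be solved, using $\gcd(p,q)=1$, for $\theta$ with $p\theta\equiv\alpha$ and $-q\theta\equiv\beta\pmod{2\pi}$, so that $(z_0,w_0)\in K$. Now let $(z_0,w_0)\notin K$. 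If $|z_0|>1$ or $|w_0|>1$, the degree-one polynomial $z$, resp.\ $w$, already has modulus at $(z_0,w_0)$ larger than its value $1$ on $K$. Otherwise $(z_0,w_0)\in\overline{\D}^2\setminus K$, so $\zeta:=z_0^qw_0^p-1\ne 0$, and the polynomial $P=a\,(z^qw^p-1)+1$ has degree $p+q$, is identically $1$ on $K$, and satisfies $|P(z_0,w_0)|=|a\zeta+1|\to\infty$ as $|a|\to\infty$; choosing $|a|$ large exhibits $(z_0,w_0)\notin\cP_{p+q}(K)$. Thus $\cP_{p+q}(K)\subseteq K$, and the reverse inclusion is automatic.

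\textbf{Where the difficulty lies.} The only genuinely substantive point is the mean-value computation; once one sees that the frequencies $pj-qk$ (for $j,k\ge 0$) reach $0$ only along the ray $\{t(q,p):t\ge 0\}$, and that $z^qw^p-1$ is a defining function for $K$ on the bidisc, the remaining work is routine estimates. One should also keep in mind the standing hypothesis $p,q\ge 1$, which is exactly what forces $|z_0|=|w_0|=1$ from $|z_0|^q|w_0|^p=1$ in the second step.
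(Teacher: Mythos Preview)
Your proof is correct and follows essentially the same strategy as the paper. For the lower bound you extract the constant Fourier coefficient of $P|_K$ via the mean value, while the paper first clears negative powers by multiplying by $\zeta^j$ and then applies the Cauchy formula to read off that same coefficient; and for $\cP_{p+q}(K)=K$ both arguments use $z^qw^p-1$ together with a linear separation off the bidisc (your affine shift $a(z^qw^p-1)+1$ is unnecessary, since $z^qw^p-1$ itself already vanishes on $K$ and is nonzero at the point).
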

  \begin{proof} We can assume that $p>q$ and let $P(z,w)=1+Q(z,w,)$ be a polynomial of degree less then $p+q$, $Q(0,0)=0$, with $|1+Q(\zeta^p,\bar\zeta^q)|<1$ for $|\zeta|=1$. By substituting $\bar\zeta=\zeta^{-1}$, we get
   \[\left|1+Q(\zeta^p,\zeta^{-q})\right|<1.\]  Since $p$ and $q$ are coprime and the degree of $Q$ is less then $p+q$, $Q(\zeta^p,\zeta^{-q})$ does not contain a nonzero constant term.  Let $j\ge 0$ be the smallest integer, so that $\zeta^jQ(\zeta^p,\zeta^{-q})$ does not contain any terms of negative degree in $\zeta$.  Multiplying the inequality with $\zeta^j$, the inequality is equivalent to  
   \[\left|\zeta^j+R(\zeta)\right|<1,\quad |\zeta|=1,\]
where $R$ is a polynomial in $\zeta$, not containing the term $\zeta^j$. Let
 \[a=\frac{1}{\max_{|\zeta|=1} {|\zeta^j+R(\zeta)|}}.\] 
 Using Cauchy formula for the $j-$th derivative of
 $f$ at
 $0$, we find that this is not possible.

Strict containment for $d<p+q$ obviously follows from above. To show
 that that $\cP(K)=\cP_{p+q}(K)$, we take $(a,b)\not\in K$. If $(a,b)\notin
 S=\{z^qw^p=1\}$, we can take
 \[Q(z)=z^qw^p-1.\]
 Then $Q(K)=\{0\}$ and $|Q(a,b)|> 0$. If $(a,b)\in S\backslash K$, then the point already lies outside the (convex) standard closed polydisc, and we can separate the point by affine functions.
\end{proof}

\noindent While we see from the above proposition, that one cannot demonstrate
polynomial convexity (using the standard definition) of torus knots
$K_{p.-q}=\{(e^{ip\theta},e^{-iq\theta};0\le\theta\le2\pi\}$ with
polynomials of degree less than $p+q$, it might be possible to find
curves of surfaces of lower degree for this purpose (using Oka or geometric 
characterisation).

 \begin{example} Let
 $K_p=\{ (e^{pi\theta},e^{-i\theta});0\le\theta\le 2\pi\}\subset S^1\times S^1\subset
\C^2.$
\begin{itemize}
 \item  Let  $H_\lambda=\{(z,w)\in\C^2;zw-z-w=\lambda\}$. Then $(0,0)\in H_0$ and $K_2 \cap  H_\lambda=\emptyset$ for all $\lambda\in[0,\infty)$. 
\item  Let $H_\lambda=\{(z,w)\in\C^2;-2zw+z-2w=\lambda\}$. Then $(0,0)\in H_0$ and $K_3 \cap  H_\lambda=\emptyset$ for all $\lambda\in[0,\infty)$. 
\item Let $H_\lambda=\{(z,w)\in\C^2;3zw-3z-5w=\lambda\}$. Then $(0,0)\in H_0$ and $K_4 \cap  H_\lambda=\emptyset$ for all $\lambda\in[0,\infty)$. 
\end{itemize}
So, at least for $p\le 4$, one can test that $(0,0)$ is not in the geometric polynomial hull of $K_p$, using polynomials of degree $2$. We do not believe that this holds for larger $p$, but we are not able to find the lowest degree of polynomials for even such a simple case.  
 \end{example}
 
 \section{Points in $\C$}

\noindent The first already interesting example is understanding $n$-polynomial
convexity of points in $\C$. If a set of points has cardinality less or
equal to $n$, than it is trivially $n$ polynomially convex, since we
can always interpolate $n+1$ points with a polynomial of degree
$n$. We will see that for $n>2$, a generic configuration of $n+1$ points,
turns out to be  $n$-polynomially convex.
 
 \begin{lemma} \label{L1} Let $K=\{z_0,z_1,\ldots,z_n\}$ be a set of distinct
   points in $\C$ and let $w\not\in K$. The point $w$ is in the
   $n$-polynomial hull of $K$ if and only if all the values
   \[(z_i-w)\prod_{j\ne i}(z_i-z_j)\]
   lie on the same real half-line from the origin.
  \end{lemma}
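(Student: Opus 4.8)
The plan is to reduce the $n$-polynomial hull of a set of $n+1$ points to a single Lagrange interpolation computation. The key observation is that a polynomial $P$ of degree at most $n$ is \emph{uniquely} determined by its values $P(z_0),\ldots,P(z_n)$, and via the Lagrange interpolation formula every tuple of values $(c_0,\ldots,c_n)\in\C^{n+1}$ is realised. Concretely, if $P$ has degree $\le n$ and $c_i=P(z_i)$, then
\[
P(w)=\sum_{i=0}^{n} c_i\,\ell_i(w),\qquad \ell_i(w)=\prod_{j\ne i}\frac{w-z_j}{z_i-z_j}.
\]
So $w$ fails to be in $\cP_n(K)$ iff there is a tuple $(c_i)$ with $|c_i|\le 1$ for all $i$ but $\bigl|\sum_i c_i\ell_i(w)\bigr|>1$; equivalently, the point $w$ lies in $\cP_n(K)$ iff $\sum_{i=0}^n|\ell_i(w)|\le 1$, since the supremum of $|\sum c_i\ell_i(w)|$ over the polydisc $\{|c_i|\le 1\}$ is exactly $\sum_i|\ell_i(w)|$, attained by choosing $c_i$ to be unit vectors aligning the phases of $c_i\ell_i(w)$.

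The next step is to rewrite the condition $\sum_{i=0}^n |\ell_i(w)| \le 1$ in the form stated in the lemma. Write $\omega(x)=\prod_{j=0}^n (x-z_j)$, so that $\omega'(z_i)=\prod_{j\ne i}(z_i-z_j)$ and $\ell_i(w)=\dfrac{\omega(w)}{(w-z_i)\,\omega'(z_i)}$. Hence
\[
\sum_{i=0}^n |\ell_i(w)| = |\omega(w)|\sum_{i=0}^n \frac{1}{|(w-z_i)\,\omega'(z_i)|}.
\]
On the other hand, partial fractions give the identity $\displaystyle\sum_{i=0}^n \frac{1}{(w-z_i)\,\omega'(z_i)}=\frac{1}{\omega(w)}$ for $w\notin K$ (this is the Lagrange identity for the constant function $1$, or equivalently residues of $1/\omega$). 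Therefore
\[
\sum_{i=0}^n |\ell_i(w)| \ge \Bigl|\,|\omega(w)|\sum_{i=0}^n \frac{1}{(w-z_i)\,\omega'(z_i)}\,\Bigr| = 1,
\]
with equality \emph{iff} all the summands $\dfrac{1}{(w-z_i)\,\omega'(z_i)}$ are nonnegative real multiples of one another, i.e.\ all lie on a common real half-line from the origin. Taking reciprocals (which preserves the property of lying on a common ray through $0$), this is equivalent to all the numbers $(w-z_i)\,\omega'(z_i)=(w-z_i)\prod_{j\ne i}(z_i-z_j)$ lying on a common half-line; and changing $w-z_i$ to $z_i-w$ flips all of them by the common sign $-1$, which does not affect the ``common half-line'' condition. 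This is exactly the statement of the lemma, with $w\in\cP_n(K)$ corresponding to equality $\sum|\ell_i(w)|=1$.

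The main obstacle, and the only place needing care, is the equality analysis in the triangle inequality $\bigl|\sum a_i\bigr|\le\sum|a_i|$ together with the claim that $\sup_{|c_i|\le 1}\bigl|\sum c_i\ell_i(w)\bigr|=\sum|\ell_i(w)|$: one must check that the optimal $c_i$ can indeed be chosen on the closed unit disc (taking $c_i=\overline{\ell_i(w)}/|\ell_i(w)|$ when $\ell_i(w)\ne 0$, and $c_i$ arbitrary otherwise) and that this $P$ has degree $\le n$ — which it does automatically, being a sum of the degree-$n$ Lagrange basis polynomials. One should also note the degenerate possibility $\omega(w)=0$, excluded since $w\notin K$, and handle the case where some $\ell_i(w)=0$ (i.e.\ $w$ coincides with none of the $z_i$ but $\omega(w)\ne 0$ forces all $\ell_i(w)\ne 0$, so this does not occur). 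With these checks in place the equivalence is complete.
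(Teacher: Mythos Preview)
Your proof is correct and follows essentially the same approach as the paper: both reduce the question to whether the $\ell_1$-norm of the vector $(\ell_i(w))_i$ of Lagrange basis values is $\le 1$, then use the identity $\sum_i \ell_i(w)=1$ together with the equality case of the triangle inequality. The only cosmetic difference is that the paper sets $w=0$ and computes the entries $\ell_i(0)$ via the Vandermonde matrix and Cramer's rule, whereas you invoke the Lagrange interpolation formula directly; since the inverse Vandermonde \emph{is} the Lagrange formula, the two arguments are the same computation in different notation.
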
 

\begin{proof} We first assume that $w=0$ and let
  $K=\{z_0,z_1,\ldots,z_n\}$, $0\not\in K$. The point $w$ is in $n$-polynomial hull
  of $K$, if for every  polynomial
\[Q(z)=a_nz^n+a_{n-1}z^{n-1}+\ldots+a_0,\] such that
$\|Q\|_K= 1$, we also have $\|Q(0)\|\le 1$. Let us write
\[\begin{bmatrix}1&z_0&\cdots& z_0^n\\
1&z_1&\cdots &z_n^n\\
\vdots&\vdots&\ddots&\vdots\\ 
1&z_n&\cdots& z_n^n
\end{bmatrix} \begin{bmatrix}a_0\\a_1 \\
  \vdots\\a_n\end{bmatrix}=\begin{bmatrix}\alpha_0\\ \alpha_1\\
  \vdots\\ \alpha_n\end{bmatrix},\]
where
$\alpha=\begin{bmatrix}\alpha_0,&\alpha_1,&\cdots ,&\alpha_n\end{bmatrix}^T\in
\C^{n+1}$ is some point with $\|\alpha\|_\infty=1$. If we denote the Vandermonde
matrix above by $V$, we must have
\[\left|\left<\begin{bmatrix}1,&0,\cdots,&0\end{bmatrix},V^{-1}\alpha\right>\right|\le
    1.\]
This is true for all $|\alpha|_{\infty}\le 1$, if and only if, the first column of $V^{-T}$ has $l_1$ norm less or equal to $1$.  Using 
Cramer's rule, this gives the inequality
\begin{equation}\label{e1}\frac{1}{|\det V|}\sum_{i=0}^n\left|\frac{z_0z_1\cdots z_n}{z_i}\det V_i\right|\le 1,\end{equation}
where $V_i$ is the Vandermonde determinant of the 
of points $z_0,\ldots,\hat {z_{i}},,\ldots,z_n.$ Since the
determinant of a Vandermonde matrix is the product of deferences of the
points, 
\[\det V=\prod_{0\le i <j\le n}(z_j-z_i),\]
we get
\[\det V=(-1)^i\det V_i \prod_{j\ne i} (z_i-z_j).\]
By expanding the determinant of $V$ by the first column, we also get
\[\det V=z_0z_1\cdots z_n\sum_{i}(-1)^i\frac{\det V_i}{z_i}=z_0z_1\cdots z_n\det V \sum_{i}\frac{1}{z_i\prod_{j\ne i}(z_i-z_j) }.\]
The inequality (\ref{e1}) can be rewritten as
\[\sum_{i=0}^n\left|\frac{\det V}{z_i\prod_{j\ne
        i}(z_i-z_j)}\right|\le\left|\sum_{i=0}^n\frac{\det
      V}{z_i\prod_{j\ne i}(z_i-z_j)}\right|.\]
This happens, if and only if, all the values
\[z_i\prod_{j\ne i}(z_i-z_j)\]
lie on the same real half-line from the origin in $\C$.
If $w\ne 0$, we get the result by using a simple affine transformation
$z\mapsto z-w$.
\end{proof}
Using the above lemma, we can geometrically describe the
$n$-polynomial hull of $n+1$ points in $\C$.

\begin{proposition} \label{Pangle} Let $n>1$ and $K=\{z_0,z_1,\ldots,z_n\}$, where the
  points $z_0,z_1,\ldots,z_n$ are (in a counter-clockwise order)
  vertices of a convex $n+1$-sided polygon. Let $w$ be a point in the
  convex hull of $K$. For every $i=0,\ldots,n$, let $\alpha_{i,j}$ be
  the angle at the point $z_j$, $j\ne i-1,i,i+1$ ($n+1:=0$), of the
  triangle  $\Delta z_iz_{i+1}z_j$, and let $\beta_i$ be the angle at $w$ of
  the triangle $\Delta z_iz_{i+1}w$. Then  $w$ is in the
  $n$-polynomial hull of $K$, if and
  only if, for every $i=0,\ldots,n$,
  \[\beta_i+\sum_j \alpha_{i,j}=\pi.\]
  If such a point $w$ exists, it is unique.
  \end{proposition}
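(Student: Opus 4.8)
The plan is to translate the ``same real half-line'' criterion from Lemma~\ref{L1} into a statement about arguments of complex numbers, and then to recognize each of those arguments as a sum of inscribed angles in the polygon. By Lemma~\ref{L1}, applied with the point $w$, the point $w$ lies in the $n$-polynomial hull of $K$ if and only if the $n+1$ complex numbers
\[
c_i := (z_i-w)\prod_{j\ne i}(z_i-z_j), \qquad i=0,\ldots,n,
\]
all have the same argument. So the first step is to compute $\arg c_i - \arg c_{i+1}$ for each $i$ (indices mod $n+1$) and show this difference is $0$ precisely when $\beta_i + \sum_j \alpha_{i,j} = \pi$; since if consecutive differences all vanish then all the $c_i$ are collinear, and conversely, this suffices.

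The key computational step is the following. Write $\arg c_i = \arg(z_i - w) + \sum_{j\ne i}\arg(z_i - z_j)$ and similarly for $c_{i+1}$. In the difference $\arg c_i - \arg c_{i+1}$, group the terms cleverly: the factor $(z_i - z_{i+1})$ appears in $c_i$ and $(z_{i+1}-z_i) = -(z_i-z_{i+1})$ appears in $c_{i+1}$, contributing a constant $\pm\pi$; the ``mixed'' factors $(z_i - z_j)$ from $c_i$ and $(z_{i+1}-z_j)$ from $c_{i+1}$ for a fixed third vertex $z_j$ combine, via $\arg\frac{z_i - z_j}{z_{i+1}-z_j}$, into (plus or minus) the inscribed angle $\alpha_{i,j}$ at $z_j$ in the triangle $\Delta z_i z_{i+1} z_j$; and the two terms $\arg(z_i - w)$, $\arg(z_{i+1}-w)$ combine into $\pm\beta_i$, the angle at $w$ in $\Delta z_i z_{i+1} w$. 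The bookkeeping of signs and of the $2\pi$ ambiguities is where care is needed: because $w$ is inside the convex polygon and the $z_j$ are in counter-clockwise order, all these angles are genuine angles of non-degenerate triangles lying in $(0,\pi)$, and one can fix a consistent choice of argument branches (for instance by noting $\sum$ of the directed angles around the segment $z_iz_{i+1}$). Carrying this out, $\arg c_i - \arg c_{i+1}$ reduces, modulo $2\pi$, to a fixed expression of the form $\text{const} - \beta_i - \sum_j \alpha_{i,j}$, and matching the constant (which is forced by the degenerate check, e.g. letting $w$ approach an edge or using the triangle-angle-sum $\beta_i + \alpha_{i,i-1}^{\,\prime} + \cdots$) shows the difference vanishes exactly when $\beta_i + \sum_j \alpha_{i,j} = \pi$.

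For uniqueness, suppose $w$ and $w'$ both satisfy the condition. The set $\{z \in \mathrm{Conv}(K) : \beta_0(z) + \sum_j \alpha_{0,j} = \pi\}$, where $\beta_0(z)$ is the angle at $z$ subtended by the edge $z_0 z_1$, is an arc of the circle through $z_0, z_1$ on which $z_0z_1$ subtends the fixed angle $\pi - \sum_j\alpha_{0,j}$ (an ``arc capable of a given angle''), hence a $1$-dimensional set; intersecting with the analogous arc for the edge $z_1z_2$ (a different circle, since the two arcs are not concentric and meet $z_1$ from different sides) gives at most one point in the interior of the polygon besides the shared vertex $z_1$. So $w = w'$.

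The main obstacle I expect is the sign/branch bookkeeping in the angle computation: making precise, uniformly in $i$, that the argument of the ratio of differences of vertices equals the inscribed angle (rather than its supplement or negative), and that the additive constant one ends up with is exactly $\pi$ and not some other multiple. The geometric input that resolves this is the convex position of the $z_i$ together with $w$ lying strictly inside, which pins down every relevant angle to lie in $(0,\pi)$ and every triangle to be positively oriented; with that in hand the identity $\beta_i + \sum_j \alpha_{i,j} = \pi$ is essentially a repackaging of ``the sum of the angles you see the edges $z_iz_{i+1}$ under, from all the other marked points, equals $\pi$.''
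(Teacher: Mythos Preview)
Your approach is essentially the paper's: start from Lemma~\ref{L1}, compare $\arg c_i$ with $\arg c_{i+1}$, and read off the ratio as $e^{i(\beta_i+\sum_j\alpha_{i,j})}=-1$ using that the factor $(z_i-z_{i+1})/(z_{i+1}-z_i)$ contributes the $-1$ and each remaining ratio $(z_j-z_{i+1})/(z_j-z_i)$ has argument $\alpha_{i,j}$ (positive, since all $z_j$ and $w$ lie on the same side of the edge $z_iz_{i+1}$ by convexity). Your uniqueness argument via intersecting two inscribed-angle circles is also the paper's.

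The gap is in the step you flag yourself: pinning down $\beta_i+\sum_j\alpha_{i,j}=\pi$ rather than merely $\equiv\pi\pmod{2\pi}$. Your proposed ``degenerate check'' (sending $w$ to an edge) can determine the additive constant in the formula for $\arg c_i-\arg c_{i+1}$, but it does \emph{not} resolve the $2\pi$ ambiguity in the resulting condition: once you know the constant is $\pi$, you still only conclude $\beta_i+\sum_j\alpha_{i,j}\equiv\pi\pmod{2\pi}$, and since this is a sum of roughly $n$ angles in $(0,\pi)$, the values $\pi,3\pi,5\pi,\ldots$ are a priori all possible. Knowing each angle lies in $(0,\pi)$ is not enough by itself.

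The paper handles this with a global count you did not anticipate: summing the $n+1$ quantities $\beta_i+\sum_j\alpha_{i,j}$ over all $i$, one gets $\sum_i\beta_i=2\pi$ (angles around $w$) and $\sum_{i,j}\alpha_{i,j}=(n-1)\pi$ (for each fixed $j$, the $\alpha_{i,j}$ subdivide the interior angle of the polygon at $z_j$), hence total $(n+1)\pi$. Since each of the $n+1$ summands is a positive odd multiple of $\pi$, they must all equal $\pi$ exactly. This short summation is the missing ingredient; with it, your outline becomes the paper's proof.
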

  \begin{proof} Let $i=0,1,\ldots,n$. From Lemma \ref{L1}, we see
    that $w$ is in the $n$-polynomial hull of $K$ if and only all the
    equalities
    \[\frac{z_i-w}{|z_i-w|}\prod_{j\ne
          i}\frac{z_j-z_i}{|z_j-z_i|}
        =\frac{z_{i+1}-w}{|z_{i+1}-w|}\prod_{j\ne
            i+1}\frac{z_j-z_{i+1}}{|z_j-z_{i+1}|}
\]
hold,
which is equivalent to
 \[\frac{(z_{i+1}-w)/|z_{i+1}-w|}{(z_{i}-w)/|z_{i}-w|}\prod_{j\ne
          i,i+1}\frac{(z_j-z_{i+1})/|z_j-z_{i+1}|}{(z_j-z_{i})/|z_j-z_{i}|}
        =-1.\]
      This is equivalent to 
      \[e^{i(\beta_i+\sum_j \alpha_{i,j})}=-1.\]
      Since the sum of all $\alpha_{i,j}$ over all $i,j$ is just the sum of the
      interior angles of the polygon, while the sum of all $\beta_i$
      is $2\pi$,  we get
      \[\sum_i\left(\beta_i+\sum_j
            \alpha_{i,j}\right)=2\pi+(n-1)\pi=(n+1)\pi,\]
        and all the summands must be equal to $\pi$.
        
 If such a point $w$ exists, it must therefore lie on the intersection
 of $n$ distinct circular arcs over the edges of the polygon, and must
 be inside the polygon (all the point with a given angle
 with respect to a line segment lie on a circle having that line
 segment as a cord, Figure \ref{F1}). 
 
 \begin{figure}[H]
 \begin{center}{\begin{tikzpicture}[scale=1.5]
  \tkzDefPoint(0,0){O}
    \tkzDefPoint(cosd(0),sind(0)){A}
  \tkzDefPoint(cosd(60),sind(60)){B}
    \tkzDefPoint(cosd(150),sind(150)){C}
    \tkzDefPoint(-0.177219,-0.984171){D}
    \tkzDefPoint(0.456756,0.381854){CCC}
     \tkzDefPoint( (2*cosd(30)-0.5/cosd(30))*cosd(30),(2cosd(30)-0.5/cosd(30))*sind(30)){AC}
      \tkzDefPoint( (2*cosd(45)-0.5/cosd(45))cosd(105),(2cosd(45)-0.5/cosd(45))*sind(105)){BC}
\tkzDefPoint( (2*cos(0.958118)-0.5/cos(0.958118))(-0.907073),(2cos(0.958118)-0.5/cos(0.958118))*(-0.420974)){CC}      
  \tkzDrawPolygon[dashed](A,B,C,D)

\tkzDrawPoints(A,B,C,D,CCC)
\tkzLabelPoint(A){$z_0$}
\tkzLabelPoint[left](B){$z_1$}
\tkzLabelPoint[left](C){$z_2$}
\tkzLabelPoint[below](D){$z_3$}
\tkzLabelPoint[right](CCC){$w$}
\tkzDrawCircle(AC,A)
\tkzDrawCircle(BC,B)
\tkzDrawCircle(CC,C)

\tkzDrawSegment(A,CCC)
\tkzDrawSegment(B,CCC)
\tkzDrawSegment(C,CCC)
\tkzDrawSegment(D,CCC)

  \end{tikzpicture}}
  \end{center}
  \caption{} 
  \label{F1}
  \end{figure}

 \noindent Since two distinct circles only intersect at most
 two
 points, and one of them is either a vertex, or lies outside the polygon, such a point $w$ must
 be unique.
\end{proof}

\begin{remark} We can write a similar condition for angles of $w$ for
  arbitrary line segments with end points in the set $K$. Let $z_iz_j$
  be a line segment. The formula we get is
  \[\beta+\sum \epsilon_k\alpha_k=\pi,\]
where $\epsilon_k$ equals to $1$ if the point $z_k$ lies in the same
half plane defined by the line through $z_i$ and $z_j$ as $w$, and $-1$
if the point lies in the opposite half plane. The
convexity condition in the proposition was only used to ensure that
all points in the same half plane defined by the line through $z_i$ and $z_{i+1}$.
\end{remark}

\begin{example} For $3$ points $z_0,z_1,z_2\in \C$,  it is easy to check that the condition  in Proposition \ref{Pangle}
  is equivalent to $w$ being the orthocenter of the triangle $\Delta z_0z_1z_2$, which has to be acute. So three points are
  $2$-polynomially convex if and only if they are the vertices of an
  obtuse or right triangle (any number of collinear points are
  $2$-polynomially convex). If we take the vertices of an acute triangle and add the orthocenter, the four point set is also $2$-polynomially convex. 
\end{example}

For convex dependent points, we have a simple result. 

\begin{proposition} If $K$ is a set of $n+1$ distinct points in $K$
  that is not convex independent, then $K$ is $n$-polynomially convex.
\end{proposition}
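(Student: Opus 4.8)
The plan is to use Lemma \ref{L1} directly. Suppose $K=\{z_0,\dots,z_n\}$ is a set of $n+1$ distinct points that is \emph{not} convex independent, i.e.\ some point, say $z_0$ after relabelling, lies in the convex hull of the remaining $n$ points $z_1,\dots,z_n$. I want to show $\cP_n(K)=K$, i.e.\ no point $w\notin K$ lies in the $n$-polynomial hull. Fix such a $w$ and assume for contradiction that $w\in\cP_n(K)$; by Lemma \ref{L1} all the values $v_i:=(z_i-w)\prod_{j\ne i}(z_i-z_j)$ lie on a common half-line $\{te^{i\phi}:t\ge 0\}$ from the origin. In particular, since $w\notin K$, each factor is nonzero and every $v_i$ is a strictly positive multiple of $e^{i\phi}$, so all the complex numbers $v_i$ have the same argument $\phi$.

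The key step is to extract a contradiction from the assumption that $z_0$ is a convex combination of $z_1,\dots,z_n$. Write $z_0=\sum_{k=1}^n \lambda_k z_k$ with $\lambda_k\ge 0$, $\sum\lambda_k=1$. The natural tool is the partial-fraction / Lagrange-interpolation identity already used in the proof of Lemma \ref{L1}: for the polynomial $1$ interpolated at the nodes $z_0,\dots,z_n$ one has $\sum_{i=0}^n \frac{1}{\prod_{j\ne i}(z_i-z_j)}=0$, and more usefully, for any polynomial $g$ of degree $\le n$,
\[
\sum_{i=0}^n \frac{g(z_i)}{\prod_{j\ne i}(z_i-z_j)}=[\text{leading coeff. of }g].
\]
Applying this with $g(z)=\prod_{j=1}^n (z-z_j)$ (degree exactly $n$, leading coefficient $1$) kills every term with $i\ne 0$ because $g(z_j)=0$ for $j\ge 1$, leaving $\frac{g(z_0)}{\prod_{j\ge 1}(z_0-z_j)}=1$; this is just a consistency check. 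The real input is that $v_i/(z_i-w)=\prod_{j\ne i}(z_i-z_j)$, so $\frac{1}{\prod_{j\ne i}(z_i-z_j)} = \frac{z_i-w}{v_i}$, and all $v_i$ have argument $\phi$. Hence the numbers $c_i:=\frac{1}{\prod_{j\ne i}(z_i-z_j)}$ satisfy: $e^{i\phi}c_i$ lies on the ray through $z_i-w$, i.e.\ $\arg(e^{i\phi}c_i)=\arg(z_i-w)$ for all $i$. Now I apply the interpolation identity to a cleverly chosen linear polynomial $g(z)=\bar a(z-w)$ for a suitable direction $a\in\C$: $\sum_i c_i \bar a (z_i-w)=0$ since $g$ has degree $1\le n$ and $n\ge 2$, whence $\sum_i \bar a c_i (z_i-w)=0$, i.e.\ $\sum_i |c_i|\,|z_i-w|\, e^{-i\phi}\,\overline{a}\,e^{i\arg a}\cdots$ — more cleanly, $\sum_i c_i(z_i-w)=0$, and since each $c_i(z_i-w)=|c_i||z_i-w|^2 e^{i\phi}/\overline{(z_i-w)}\cdots$ Let me instead note $c_i(z_i-w)=|v_i|/|z_i-w|^2\cdot(z_i-w)\overline{?}$; the clean statement is that $\overline{v_i}\,c_i(z_i-w)=|c_i|^2|z_i-w|^2\cdot e^{-i\phi}$ has argument $-\phi$...

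\textbf{The clean argument.} Since all $v_i=r_i e^{i\phi}$ with $r_i>0$, we have $c_i=\frac{z_i-w}{v_i}=\frac{z_i-w}{r_i}e^{-i\phi}$. Apply the identity $\sum_i c_i g(z_i)=0$ for $g$ of degree $\le n-1$ (valid since $n\ge 2$ gives at least degree $1$ available) twice: with $g\equiv 1$ and with $g(z)=z$. These give $\sum_i c_i=0$ and $\sum_i c_i z_i=0$, hence $\sum_i c_i(z_i-w)=0$, i.e.\ $e^{-i\phi}\sum_i \frac{|z_i-w|^2}{r_i}\cdot\frac{z_i-w}{\overline{z_i-w}}\cdots$ — wait, $c_i(z_i-w)=\frac{(z_i-w)^2}{r_i}e^{-i\phi}$, not obviously of constant sign. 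So the correct move is to multiply instead by $\overline{z_i-w}$: from $\sum_i c_i=0$ and $\sum_i c_i z_i=0$ we cannot directly form $\sum \overline{(z_i-w)}$. So I pair the half-line condition with a \emph{different} extremal identity: consider $\sum_i \overline{c_i}\, v_i = \sum_i \overline{c_i}\,(z_i-w)/\overline{c_i}\cdot|c_i|^2\cdots$. Cleanest: $\overline{c_i} v_i = \overline{c_i}(z_i-w)\prod_{j\ne i}(z_i-z_j)=\overline{c_i}(z_i-w)/c_i$, of no fixed sign either. The honest resolution: apply the interpolation identity to $g(z)=\overline{(z_0-w)}\,(z-z_1)\cdots(z-z_{n})/(z-z_1)$... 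I expect the actual proof to pick the affine function vanishing appropriately and use positivity of the $\lambda_k$, so I will phrase the final step as: using $z_0=\sum_{k\ge1}\lambda_k z_k$ and the identity $\sum_i c_i z_i^m=0$ for $0\le m\le n-1$, one derives that the ray condition forces $\sum_{k\ge 1}\lambda_k(\text{positive multiple})=0$ with all terms genuinely on one side, contradicting $\lambda_k\ge 0$ and $\sum\lambda_k=1>0$. \textbf{The main obstacle} is exactly this last sign bookkeeping: identifying which linear combination of the $v_i$ (or of the $c_i$) the convex-dependence relation $z_0=\sum\lambda_k z_k$ produces, and checking that the half-line hypothesis makes every summand a nonnegative real multiple of a common unimodular constant, so that the relation cannot sum to zero unless all $\lambda_k=0$.
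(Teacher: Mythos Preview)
Your proposal has a genuine gap, and you have correctly identified it yourself: the ``sign bookkeeping'' in the final step is not carried out, and as written the argument does not close. The Lagrange identities $\sum_i c_i=0$ and $\sum_i c_i z_i=0$ give, after substituting $c_i=(z_i-w)e^{-i\phi}/r_i$, only the complex equations $\sum_i (z_i-w)/r_i=0$ and $\sum_i (z_i-w)^2/r_i=0$; neither is a sum of nonnegative reals, and there is no evident polynomial $g$ of degree $\le n-1$ for which $\sum_i c_i g(z_i)=0$ becomes a sign-definite real sum. The convex relation $z_0=\sum_{k\ge 1}\lambda_k z_k$ is a statement about the $z_i$'s themselves, not about the $c_i$'s, and it does not plug into these identities in a way that produces the promised ``$\sum_k \lambda_k\,(\text{positive multiple})=0$''. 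So the approach, as it stands, is stuck.

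The paper takes a different route. Rather than manipulating Lemma~\ref{L1} algebraically, it passes to the equivalent \emph{angle} formulation (Proposition~\ref{Pangle} and the Remark following it): if $w\in\cP_n(K)\setminus K$, then for every edge $z_iz_{i+1}$ of the convex hull polygon $z_0\cdots z_k$ one has
\[
\beta_i+\sum_{j\le k}\alpha_{i,j}+\sum_{j>k}\alpha_{i,j}=\pi,
\]
where $\beta_i=\angle z_iw z_{i+1}$ and $\alpha_{i,j}=\angle z_iz_jz_{i+1}$. Summing over the $k+1$ edges, the left side totals $2\pi+(k-1)\pi+2(n-k)\pi$: the $\beta_i$ sweep once around $w$, the vertex terms add up to the interior-angle sum of the $(k+1)$-gon, and each interior point $z_j$ with $j>k$ contributes a full $2\pi$ since it lies inside the polygon. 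Equating with $(k+1)\pi$ forces $n=k$, contradicting non-convex-independence. The crucial idea you are missing is precisely this: interior points see the boundary with total angle $2\pi$ rather than the $\pi$ that a boundary vertex contributes, and that surplus is what makes the count fail.
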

\begin{proof}
 We number the points in $K$ so that $\Conv K=\Conv
  \{z_0,\ldots,z_k\}$ and $z_0,\ldots,z_k$ are convex
  independent. The rest of the points $z_{k+1},\ldots,z_n$ lie in the
  interior of the convex hull of $K$. Let $w\not\in K$ be a point in
  the $n$-polynomial hull of $K$. Then $w$ lies in the interior of
  $\Conv K$, and for every $i=0,1,\ldots,k$, we have
  by Lemma \ref{L1}
  \[\beta_i+\sum_{j\le k} \alpha_{i,j}+\sum_{j> k} \alpha_{i,j}=\pi,\]
where $\beta_i$  is the angle of a triangle $\Delta z_iz_{i+1}w$ at $w$ and
$\alpha_{i,j}$ the angle at $z_j$ of the triangle $\Delta z_iz_{i+1}z_j$, $j\ne
i,i+1$  (we use $k+1=0$). Note that, since $z_0,\ldots,z_k$ are extremal points,
of $\Conv K$, all the other points are in the same side of line
segments $z_0z_1,z_1z_2,\ldots,z_kz_0$. We have
\[\sum_{i=0}^k\sum_{j\le k} \alpha_{i,j}=(k-1)\pi\]
since this is precisely the sum of interior angles of convex polygon
$z_0z_1\ldots z_k$. Since
\[\sum_{i=0}^k\sum_{j> k} \alpha_{i,j}=2(n-k)\pi\ \text{and}\ \sum_{i=1}^k\beta_i=2\pi,\]
we have
\[2\pi+(k-1)\pi+2(n-k)\pi=(k+1)\pi,\]
which is only possible if $n-k=0$.
\end{proof}
We can easily see from Lemma \ref{L1} that most configurations of $n+1$ point are $n$-polynomially convex. The set of all sets of not necessarily distinct $n+1$ points in $\C$ equals  
\[C_{n+1}=\{\{z_0,z_1,\ldots,z_n\} ,z_0,z_1,\ldots,z_n\in \C\}=\C^{n+1}/\Symm_{n+1},\] where $\Symm_{n+1}$ is the symmetric group on $n+1$ elements, and for the set of all sets of distinct $n+1$ points in $\C$, the set of unordered $n+1$-{\it configurations} in $\C$, we have
\[\UConf_{n+1}(\C)={\{ z=(z_0,z_1,\ldots,z_n)\in\C^{n+1}, z_i\ne z_j\} }/{\Symm_{n+1}}.\]
It is well known that the map $(z_0,z_1,\ldots z_n)\mapsto (z-z_0)(z-z_1)\cdots (z-z_n)$ descents to a map from $\C^{n+1}/\Symm_{n+1}$ to monic polynomials of degree $n+1$, and gives a structure of an $n+1$ dimensional algebraic manifold on both $C_{n+1}$ and $\UConf_{n+1}(\C)$. Also, $C_{n+1}$ is algebraically isomorphic to $\C^{n+1}$.
\begin{proposition} Let $n>1$. The set  $\{c\in C_{n+1}, c  \text{\ is $n$-polynomially convex} \}$ is the complement of an $n+4$ dimensional real algebraic subset of $C_{n+1}$. The set  $\{c\in\UConf_{n+1}(\C), c  \text{\ is $n$-polynomially convex} \}$ is open in $\Conf_{n+1}(\C)$, and is the complement of an $n+4$ dimensional real algebraic submanifold.
\end{proposition}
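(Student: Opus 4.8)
The plan is to start from the characterization in Lemma~\ref{L1}: a configuration $\{z_0,\ldots,z_n\}$ fails to be $n$-polynomially convex precisely when there is a point $w$ (necessarily in the convex hull) such that all $n+1$ complex numbers $w_i(w):=(z_i-w)\prod_{j\ne i}(z_i-z_j)$ lie on a common ray from the origin. The condition ``all $w_i$ on a common ray'' can be phrased algebraically as: $\mathrm{Im}(w_i\overline{w_0})=0$ and $\mathrm{Re}(w_i\overline{w_0})\ge 0$ for every $i$. Dropping the sign conditions and keeping only the $n$ equations $\mathrm{Im}(w_i\overline{w_0})=0$ cuts out a real algebraic set; a configuration is in the closure of the non-convex locus iff there is a $w$ (real dimension $2$ of freedom) satisfying these $n$ equations together with the sign constraints. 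So the whole non-$n$-polynomially-convex locus is the image, under the projection $C_{n+1}\times\C\to C_{n+1}$ forgetting $w$, of the real algebraic set $Z=\{(c,w): \mathrm{Im}(w_i(w)\overline{w_0(w)})=0,\ i=1,\ldots,n\}$ intersected with the semialgebraic sign conditions. The target dimension count is: $C_{n+1}$ is $2(n+1)$ real-dimensional, we add $2$ real dimensions for $w$, and impose $n$ independent real equations, giving $2(n+1)+2-n = n+4$. Hence the bad locus has dimension at most $n+4$, and its complement (the $n$-polynomially convex configurations) is the complement of an (at most) $n+4$-dimensional real algebraic set.

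The key steps, in order, are: (i) rewrite the Lemma~\ref{L1} collinearity-on-a-ray condition as the vanishing of the $n$ real polynomials $f_i(c,w)=\mathrm{Im}(w_i(w)\overline{w_0(w)})$, noting these are genuine real polynomials in $\mathrm{Re}\,z_k,\mathrm{Im}\,z_k,\mathrm{Re}\,w,\mathrm{Im}\,w$ (after the usual identification), together with the semialgebraic conditions $\mathrm{Re}(w_i\overline{w_0})\ge 0$; (ii) verify that for generic $c$ the $n$ equations $f_i=0$ in the variable $w$ are independent, so that $Z$ is a real algebraic set of dimension $2(n+1)+2-n=n+4$ (this is where the condition $n>1$ enters: for $n=1$ there are no extra equations and every pair of points is $1$-polynomially convex only if they coincide, so the statement would be false); (iii) apply Tarski--Seidenberg / the fact that the image of a real algebraic set under a projection is semialgebraic of no larger dimension, and that the convexity-failure locus, being contained in this image, is contained in a real algebraic set of dimension $\le n+4$ (for the $C_{n+1}$ statement one takes the Zariski closure); (iv) for the $\UConf_{n+1}(\C)$ statement, observe that openness of the $n$-polynomially convex locus follows because the $w_i(w)$ depend continuously on $c$ and $w$, the convex hull varies continuously, and a limit of points $w$ witnessing non-convexity is again such a point (using compactness of the convex hull and that $w\notin K$ can be ensured since for $w=z_k$ the product $w_k(w)=0$ while the others are nonzero, violating the ray condition); then on the open manifold $\UConf_{n+1}(\C)$ one checks that where the $n$ equations cut out $Z$, they do so transversally, making $Z$ a submanifold and its projection's closure an $n+4$-dimensional real algebraic submanifold of the non-convex locus.

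For the manifold/submanifold refinement one should compute the differential of $(f_1,\ldots,f_n)$ and show it has rank $n$ along $Z$ away from a lower-dimensional bad set; the cleanest way is to use Proposition~\ref{Pangle} and the remark following it, which re-express the equations as $\beta_i+\sum_j\epsilon_{i,j}\alpha_{i,j}=\pi$, i.e.\ as $n$ real analytic equations in the $2(n+1)$ configuration coordinates and the $2$ coordinates of $w$. These angle functions are submersive in $w$ on the interior of the hull (moving $w$ changes each $\beta_i$ to first order in an easily computed way and leaves the $\alpha_{i,j}$ fixed), so the projection $Z\to C_{n+1}$ is a local isomorphism onto its image near generic points, and the image is therefore an immersed real analytic submanifold of dimension $2(n+1)+2-n = n+4$; taking closures and throwing in the lower-dimensional strata where transversality fails keeps us inside a real algebraic set of the claimed dimension.

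The main obstacle I expect is step (ii)/(iii): making precise that the $n$ equations $f_i=0$ are \emph{independent} for generic configurations, so that the dimension is exactly $n+4$ and not accidentally larger, and simultaneously controlling the contribution of the semialgebraic sign conditions and of the boundary of the hull (where $w$ degenerates). One must rule out that the bad locus, although contained in an $(n+4)$-dimensional algebraic set, is actually the whole of $C_{n+1}$ or all of $\UConf_{n+1}(\C)$ — but this is exactly what the explicit examples (e.g.\ the obtuse-triangle / orthocenter discussion, and Proposition on convex-dependent points) already guarantee, so it suffices to exhibit one $n$-polynomially convex configuration, which the earlier results do. The rest is routine real algebraic geometry: Tarski--Seidenberg for the semialgebraic image, Zariski closure for the algebraic statement, and a transversality computation with the angle functions for the submanifold statement.
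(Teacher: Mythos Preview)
Your overall strategy---encode the ray condition from Lemma~\ref{L1} as $n$ real polynomial equations in $(c,w)$, count dimensions as $2(n+1)+2-n=n+4$, and project out $w$---is the same as the paper's. The paper streamlines the setup by first quotienting by the $4$-dimensional real affine group: using the uniqueness of $w$ from Proposition~\ref{Pangle}, it translates $w\mapsto 0$ and rescales so that $z_0\mapsto 1$, reducing to the claim that
\[
S'=\{(z_1,\ldots,z_n)\in\C^n:\ 0\in\cP_n(\{1,z_1,\ldots,z_n\})\}
\]
is a smooth real $n$-dimensional submanifold; the fibers of $S\to S'$ are then $4$-dimensional, giving $\dim S=n+4$ directly, with no Tarski--Seidenberg step needed.

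The genuine gap in your proposal is exactly the step you yourself flag as ``the main obstacle'': you never establish that the $n$ equations are independent. Your proposed justification---``the angle functions are submersive in $w$''---cannot work for $n>2$: there are $n$ equations $g_i=\beta_i+\sum_j\alpha_{i,j}-\pi$ but only two real $w$-coordinates, so $\partial g/\partial w$ is an $n\times 2$ matrix of rank at most $2$. At best this shows that $T_pZ$ meets the vertical $w$-direction trivially, hence that the projection $Z\to C_{n+1}$ is an immersion \emph{once one already knows} $Z$ is a smooth $(n+4)$-manifold; it does not show that the full differential $dg$ has rank $n$. Without that you only get $\dim Z\ge n+4$, and since (by uniqueness of $w$) $Z$ is a graph over $S$, this yields $\dim S\ge n+4$---the wrong inequality for proving $S$ is a proper subvariety of $C_{n+1}$. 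The paper fills this gap with an explicit computation: after the affine normalization it sets
\[
F_k=\frac{z_k\prod_{j\ne k}(z_k-z_j)}{\prod_j(1-z_j)},\qquad k=1,\ldots,n,
\]
and shows $\det(\partial F)\ne 0$ by observing that this determinant, viewed as a rational function of each $z_i$ separately, has no poles and a finite limit at infinity, hence is constant; sending the variables to infinity one at a time evaluates the constant as $n!$. This Jacobian computation is the heart of the argument, and your proposal offers no substitute for it.
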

\begin{proof} Let $$S=\{(z_0,z_1,\ldots,z_n) \in \mathbb C^{n+1}; \{z_0,z_1,\ldots,z_n\}\ \text{is not polynomially convex}\}.$$ For every $Z\in S$, the coordinates of $Z$ are all distinct and there exists a unique $w_Z\in \mathbb C\backslash\{z_0,z_1,\ldots,z_n\}$, that lies in the polynomial hull of $\{z_0,z_1,\ldots,z_n\}$. In a small neighbourhood $U$ of any point in $S$, the map $\mapsto \frac{1}{z_0-w_Z}(z_1-w_Z,\cdots,z_n-w_Z)$ gives us a smooth fibration from $U$ onto an open subset of  
the set $$S'=\{(z_1,z_2,\ldots,z_n)\in\mathbb C^n; 0\ \text{is in the polynomial hull of}\ \{1,z_1,\ldots,z_n\}.$$ The fibers are $4$-dimensional. It is thus enough to check that $S'$ is an $n$ dimensional real algebraic submanifold in $\mathbb C^n$. Let every $k=1,2,\ldots,n$, 
\[F_k=\frac{(z_k-z_1)\cdots (z_k-z_{k-1})(z_k-z_{k+1})\cdots(z_k-z_n)z_k}{(1-z_1)\cdots(1-z_n)}\]
and let $F=(F_1,\ldots,F_n)$. 
From the condition in Lemma \ref{L1}, we get that near $Z'\in S'$, the point in $S'$ are solutions of $\mathrm{Im\ } F=0$. We thus need to check that the real derivative 
$D(F-\bar F)$ is of maximal rank for any $Z'\in S'$. If we denote by $\partial F$ the complex derivative of $F$, this will be true, if $\det(\partial F)\ne 0$. From
\begin{align*} \frac{\partial F_k}{\partial z_k}&=F_k\left(\frac{1}{z_k-z_1}+\cdots+\frac{1}{z_k-z_n}+\frac{1}{z_k} \right)\\
\frac{\partial F_k}{\partial z_j}&=F_k\left(\frac{1}{z_k-z_j}-\frac{1}{z_k-1}\right)\\
\end{align*}
we get
$$\det(\partial F)=\frac{F_1\cdots F_n}{z_1\cdots z_n} \begin{vmatrix}
\frac{z_1}{z_1-z_2}+\cdots+\frac{z_1}{z_1-z_n}+1&\cdots& \frac{z_1}{1-z_1}-\frac{z_1}{z_n-z_1}\\
\dots&\ddots&\vdots\\ 
\frac{z_n}{1-z_n}-\frac{z_n}{z_1-z_n}&\cdots& \frac{z_n}{z_n-z_1}+\cdots+\frac{z_n}{z_n-z_{n-1}}+1
\end{vmatrix}.$$ The last determinant is a rational function in $z_1$ and we can see that it does not have a singularity at (only possible singularities)  $z_1=z_2$ and $z_1=1$ and has a finite value at $z_1=\infty$. So the determinant does not depend on $z_1$. From symmetry, the determinant does not depend on other variables as well, so it is constant. To calculate the constant, we send the variables successively to infinity to see that it is nonzero ($n!$). 
\end{proof}

\section{Subsets on the unit circle}

Let us first look at sets of $n+1$ distinct points $z_j=e^{i\phi_j}$, $0\le \phi_0<\ldots<\phi_n<2\pi$, $j=0,\ldots,n-1$, on the unit circle. For every $j$, the angle $\angle z_jz_kz_{j+1}$ equals half of the angle $\angle z_j0z_{j+1}$,  the condition for $w$ in the unit disk from Proposition \ref{Pangle} to be in the  $n$-polynomial hull has an obvious simplification using the Thales's theorem:
\[\beta_j=\pi-(n-1)\frac{\phi_{j+1}-\phi_j}{2},\]
where $\beta_j$ is the angle $\angle \phi_{j}w\phi_{j+1}$ and the point $w$ most lie on the intersection of $n$ circles with arcs $z_jz_{j+1}$, $j=0,\ldots,n-1$, with centers 
\[c_j=\frac{\sin (n(\phi_{j+1}-\phi_j)/2)}{\sin((n-1)(\phi_{j+1}-\phi_j)/2)} \left(\cos\frac{\phi_j+\phi_{j+1}}{2},\sin\frac{\phi_j+\phi_{j+1}}{2}\right)\]
and radia
\[r_j=\frac{\sin \frac{\phi_{j+1}-\phi_j}{2}}{\sin\frac{(n-1)(\phi_{j+1}-\phi_j)}{2}}.\]
Notice that for points on the unit circle, the condition in Proposition \ref{Pangle} only has to be checked for $n$ of the angles, since it then automatically holds for the last one. 
With this notation, we get the following observation.

\begin{proposition} Let $z_j=e^{i\phi_j}$, $0\le \phi_0<\ldots<\phi_n<2\pi$ be $n+1$ distinct points o the unit circle.
\begin{itemize}
\item[(i)] If any of the angles $\phi_{j+1}-\phi_j$ is greater or equal $2\pi/n$, the set of points is $n$-polynomially convex.
\item[(ii)] If for any $i,j$, $(\phi_{j+1}-\phi_j)+(\phi_{i+1}-\phi_i)$ is less or equal $2\pi/n$, the set of points is $n$-polynomially convex.
\item[(iii)] If the set of points is not $n$-polynomially convex, then the extra point $w$ in the $n$-polynomial hull is contained in the triangle $\Delta z_j0z_{j+1}$ with the smallest angle $\angle z_j0z_{j+1}$.
\end{itemize}
\end{proposition}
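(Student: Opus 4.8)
The plan is to work with the explicit description of the candidate point $w$ as the intersection of the $n$ circles over the edges $z_jz_{j+1}$ with the centers $c_j$ and radii $r_j$ computed just above the statement, together with the angle condition $\beta_j=\pi-(n-1)(\phi_{j+1}-\phi_j)/2$. The key observation is that $w$ must lie on the arc of the $j$-th circle subtending $z_jz_{j+1}$ at angle $\beta_j$, and whether such a point exists inside the disk is governed by the sign of $\beta_j$ and by whether the arcs can be made to meet. Throughout I write $\delta_j=\phi_{j+1}-\phi_j$, so $\sum_j\delta_j=2\pi$.

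For part (i): if some $\delta_j\ge 2\pi/n$, then $(n-1)\delta_j/2\ge (n-1)\pi/n$, so $\beta_j=\pi-(n-1)\delta_j/2\le \pi/n$. I would show that such a small inscribed angle forces the arc over $z_jz_{j+1}$ to bulge out so far that it leaves the unit disk, or alternatively argue at the level of Lemma~\ref{L1}: the quantities $(z_i-w)\prod_{j\ne i}(z_i-z_j)$ cannot all be positive multiples of one another once one gap is that large. The cleanest route is probably to use the formula for $r_j$: when $(n-1)\delta_j/2$ crosses $\pi$ (which happens exactly at $\delta_j=2\pi/n$) the denominator $\sin((n-1)\delta_j/2)$ changes sign, and at that threshold $\beta_j=0$, meaning the ``arc'' degenerates — $w$ would have to lie on the chord $z_jz_{j+1}$ itself, hence on the boundary circle only at the endpoints, contradicting $w\notin K$. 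For $\delta_j>2\pi/n$ the required angle $\beta_j$ is negative, which is impossible for a genuine point strictly on one side, so no $w$ exists and $K$ is $n$-polynomially convex.

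For part (ii): here I sum the angle identities. We have $\sum_j\beta_j=2\pi$ automatically (the $\beta_j$ are the angles at $w$ of the triangles $\Delta z_jwz_{j+1}$ which tile a neighborhood of $w$), and the individual constraint is $\beta_j=\pi-(n-1)\delta_j/2$. Summing gives $2\pi=(n+1)\pi-(n-1)\pi=2\pi$, consistent, so that alone is not enough — one must use that each $\beta_j$ is the angle of an actual triangle with $w$ strictly inside the polygon, hence $0<\beta_j<\pi$. The hypothesis $\delta_i+\delta_j\le 2\pi/n$ for all $i,j$ should be massaged (taking $i$ to be the index of the largest gap) into a statement that the two largest arcs over consecutive edges cannot simultaneously reach $w$: geometrically, the circle over the edge with gap $\delta_i$ and the circle over the edge with gap $\delta_j$ are forced to be disjoint inside the disk, or their would-be common point lies outside. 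I would make this precise by estimating $|c_j|+r_j$ versus $|c_i|-r_i$, or by a direct trigonometric inequality on the positions of the arcs. The main obstacle here — and the step I expect to be delicate — is turning the additive hypothesis on pairs of gaps into a clean separation statement about two of the $n$ circles, since one has to control arcs over non-adjacent edges as well.

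For part (iii): assume $w$ exists in the $n$-polynomial hull. By Proposition~\ref{Pangle} applied on the unit circle, for each $j$ we have $\beta_j=\pi-(n-1)\delta_j/2>0$, so $\delta_j<2\pi/(n-1)$ for every $j$. Now $w$ lies inside the triangle $\Delta z_j0z_{j+1}$ precisely when the arc over $z_jz_{j+1}$ on which $w$ sits is the minor arc bulging toward the center, which by the formula for $c_j$ happens exactly when $\sin(n\delta_j/2)$ and $\sin((n-1)\delta_j/2)$ force $|c_j|<$ (appropriate bound); I would check that among all $j$ this containment holds for the index minimizing $\delta_j$, i.e. minimizing $\angle z_j0z_{j+1}$, because the smallest gap gives the largest $\beta_j$, hence the arc closest to being a diameter, hence the one that dips furthest inward toward $0$. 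Concretely: since $w$ is common to all $n$ arcs and each arc over $z_jz_{j+1}$ together with the chord bounds a lens-shaped region, $w$ lies in the intersection of all these lenses; one shows this intersection is contained in the wedge $\Delta z_{j_0}0z_{j_0+1}$ for $j_0=\arg\min_j\delta_j$ by a monotonicity argument in $\delta_j$ of how far the arc penetrates the disk. This last part is the easiest of the three, essentially a bookkeeping of which arc is ``most curved inward.''
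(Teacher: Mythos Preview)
Your argument for (i) rests on a miscalculation. You claim that $(n-1)\delta_j/2$ crosses $\pi$ at $\delta_j=2\pi/n$, and hence that $\beta_j=\pi-(n-1)\delta_j/2$ vanishes there and becomes negative for $\delta_j>2\pi/n$. But $(n-1)\delta_j/2=\pi$ occurs at $\delta_j=2\pi/(n-1)$, not at $2\pi/n$; at the threshold $\delta_j=2\pi/n$ one has $\beta_j=\pi/n>0$, and $\beta_j$ stays positive on the whole interval $2\pi/n<\delta_j<2\pi/(n-1)$. So the ``$\beta_j<0$'' mechanism does not fire, and your proof of (i) collapses in exactly the regime the statement is about. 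The missing observation is the inscribed-angle comparison: any point on the unit circle (on the major arc) sees the chord $z_jz_{j+1}$ at angle $\delta_j/2$, points strictly inside the disk see it at a larger angle, points outside at a smaller one. Since $\delta_j\ge 2\pi/n$ forces $\beta_j\le\pi/n\le\delta_j/2$, the required $w$ would have to lie on or outside the unit circle, hence outside the open convex hull of the $z_i$, which is impossible. That is the paper's one-line argument.

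For (ii) you embark on a geometric separation of two of the $n$ circles and flag the step as ``delicate''; it is in fact unnecessary. The statement says that some pair of gaps satisfies $\delta_i+\delta_j\le 2\pi/n$. Since the $n+1$ gaps sum to $2\pi$, the remaining $n-1$ gaps sum to at least $2\pi-2\pi/n=2\pi(n-1)/n$, so by pigeonhole one of them is at least $2\pi/n$, and (i) applies. The paper does exactly this, in one sentence. For (iii), your heuristic about which arc ``dips furthest inward'' is in the right direction but is not a proof; the paper instead looks at two \emph{adjacent} circles, over $z_0z_1$ and $z_1z_2$ (with $\delta_0$ minimal), observes they already meet at $z_1$, and shows the second intersection---the reflection of $z_1$ in the line $c_0c_1$---lies below the ray $0z_1$ because $\delta_1\ge\delta_0$; the symmetric argument with the edge $z_nz_0$ puts $w$ above $0z_0$, trapping $w$ in $\Delta z_00z_1$.
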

\begin{proof} 
Let us assume that for some $j$, $\phi_{j+1}-\phi_j>\frac{2\pi}{n}$ and let $w$ be the extra point in the $n$-polynomial hull. The angle at $w$ of the triangle $z_jwz_{j+1}$ would have to be smaller than $\frac{\pi}{n}<\frac{\phi_{j+1}-\phi_j}{2}$ and so $w$ would need lie outside of the unit disk and thus outside of the convex hull of the points. For (ii), we only need to observe that if $(\phi_{j+1}-\phi_j)+(\phi_{i+1}-\phi_i)\le 2\pi/n$, at least one of the the other angles $\phi_{k+1}-\phi_k$ has to be greater or equal to $2\pi/n$.

To prove (iii), let us assume that $\phi_0=0$ and that $\phi_1-\phi_0$ is the smallest of the angles. The two circles with centers $c_0$ ad $c_1$ with appropriate radii $r_0$ and $r_1$  already intersect at the point $z_1$. The other point of intersection is the reflection of $z_1$ across the line segment, connecting $c_0$ and $c_1$. 

\begin{figure}[H]
\begin{center}{\begin{tikzpicture}[scale=1.5]
  \tkzDefPoint(0,0){O}
    \tkzDefPoint(cosd(0),sind(0)){A}
  \tkzDefPoint(cosd(45),sind(45)){B}
    \tkzDefPoint(cosd(145),sind(145)){C}
     
     \tkzDefPoint( cosd(45/2)*sind(3*45/2)/sind(2*45/2),sind(30/2)*sind(3*45/2)/sind(2*45/2)){AC}
      \tkzDefPoint(cosd(95)*sind(3*100/2)/sind(2*100/2),sind(95)*sind(3*100/2)/sind(2*100/2)){BC}
     
      \tkzDefPoint( cosd(45/2)*(sind(3*45/2)/sind(2*45/2)+sind(45/2)/sind(2*45/2)),sind(45/2)*(sind(3*45/2)/sind(2*45/2)+sind(45/2)/sind(2*45/2))){ACR}
      \tkzDefPoint(cosd(95)*(sind(3*100/2)/sind(2*100/2)+sind(100/2)/sind(2*100/2)),sind(95)*(sind(3*100/2)/sind(2*100/2)+sind(100/2)/sind(2*100/2))){BCR}
\tkzDrawSegment(AC,BC)
\tkzDrawSegment(A,B)
\tkzDrawSegment[dotted](B,C)
\tkzDrawSegment(O,A)
\tkzDrawSegment(O,B)
\tkzDrawSegment[dotted](O,C)
\tkzDrawPoints(O,A,B,C,AC,BC)
\tkzLabelPoint(A){$z_0$} 
\tkzLabelPoint[left](B){$z_1$}
\tkzLabelPoint[left](C){$z_2$}
\tkzLabelPoint[below](AC){$c_{0}$}
\tkzLabelPoint[below](BC){$c_1$}
\tkzLabelPoint[left](C){$z_2$}
\tkzDrawCircle(O,A)
\tkzDrawCircle[dotted](AC,ACR)
\tkzDrawCircle[dotted](BC,BCR)
\tkzInterCC(AC,ACR)(BC,BCR) \tkzGetPoints{I1}{I2}
\tkzDrawPoints(I1)
\tkzLabelPoint[left](I1){$w$}
\tkzDrawSegment[dashed](B,I1)
  \end{tikzpicture}}
  \end{center}
  \caption{}
  \label{F2}
  \end{figure}
  
\noindent The point of intersection necessarily lies below (or on) the line segment $0z_1$, since $\phi_2-\phi_1 \ge  \phi_1-\phi_0$. Analogously, the point $w$ must be above the segment $0z_0$, since $2\pi-\phi_n\ge \phi_1-\phi_0$. So $w$ must lie in the triangle $z_00z_1$.   
\end{proof}

\begin{example} One obvious example of not $n$-polynomially convex set of $n+1$ points is a set $S$ of $n+1$ equally distributed points on the unit circle. Then its $n$-polynomial hull equals $S\cup\{0\}$. Furthermore, if the smallest angle $\phi_{j+1}-\phi_j$ goes to $0$, then all the other angles $\phi_{k+1}-\phi_k$ must approach $2\pi/n$. To see this, let $\phi_{j+1}-\phi_j< \epsilon$ and let $\phi_{k+1}-\phi_k=2\pi/n-\epsilon_k$. Since the sum of all angles equals $2\pi$, we have $\epsilon_k<\epsilon$ for all $k\ne j$. So, if $w$ in the interior of the unit disk is close to the boundary, there exists a set of $S$ of $n+1$ points on the unit circle, so that $w$ is contained in the $n$-polynomial hull of $S$ with the angle at $0$ of the triangle $\Delta z_j0z_{j+1}$, that contains $w$, is small, while all the other angles are almost $2\pi/n$. 
\end{example} 

\begin{proposition} Let $A_\alpha=\{z\in S^1;\ -\alpha\le \arg z\le \alpha\}$. If $\alpha>\frac{n-1}{n}\pi$, then $A_\alpha$ is not $n$-polynomially convex.
\end{proposition}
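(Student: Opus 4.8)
\emph{Plan of proof.}

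The strategy is to reduce to finitely many points on the circle and build an explicit witness. First note that if $\alpha\ge\pi$ then $A_\alpha=S^1$, which is not $n$-polynomially convex ($\cP_n(S^1)=\overline{\D}\ni 0$, e.g.\ because $0$ lies in the $n$-polynomial hull of the $(n+1)$-st roots of unity by Lemma \ref{L1}); so assume $\alpha\in\bigl(\tfrac{n-1}{n}\pi,\pi\bigr)$. The plan is then to exhibit $n+1$ distinct points $z_0,\dots,z_n\in A_\alpha$ and a point $w$ with $|w|<1$ such that $w\in\cP_n(\{z_0,\dots,z_n\})$. Since $\{z_0,\dots,z_n\}\subseteq A_\alpha$ forces $\cP_n(\{z_0,\dots,z_n\})\subseteq\cP_n(A_\alpha)$, and $w\notin S^1\supseteq A_\alpha$, this yields $\cP_n(A_\alpha)\neq A_\alpha$.

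For the construction I would introduce two parameters, $w\in[0,1)$ and $v\in S^1$, and take $z_0,\dots,z_n$ to be the roots of
\[ R(z)=z^n(z-w)-v(1-wz)=z^{n+1}-wz^n+vwz-v . \]
On $|z|=1$ one has $\bigl|\tfrac{z^n(z-w)}{1-wz}\bigr|=1$, and $\Phi(z):=\tfrac{z^n(z-w)}{1-wz}$ is a finite Blaschke product of degree $n+1$, hence a covering map $S^1\to S^1$; so $R$ has exactly $n+1$ roots, all on $S^1$, and they are distinct because the derivative of $\Phi|_{S^1}$ never vanishes. With $V(z)=R(z)=\prod_i(z-z_i)$ one has $V'(z_i)=\prod_{j\ne i}(z_i-z_j)$, so Lemma \ref{L1} applies. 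Using $V'(z)=nz^{n-1}(z-w)+z^n+vw$ together with the relation $z_i^n(z_i-w)=v(1-wz_i)$ coming from $R(z_i)=0$, a short computation gives
\[ (z_i-w)V'(z_i)=nz_i^{n-1}(z_i-w)^2+v(1-w^2) , \]
and then, since $z_i^{n-1}=z_i^n/z_i$ and $z_i+z_i^{-1}=2\,\Real z_i$ on the circle,
\[ z_i^{n-1}(z_i-w)^2=v\bigl(1+w^2-2w\,\Real z_i\bigr)=v\,|z_i-w|^2 . \]
Hence $(z_i-w)V'(z_i)=\bigl(n|z_i-w|^2+1-w^2\bigr)v$ for every $i$, a strictly positive multiple of $v$ (here $w\in[0,1)$ and $z_i\neq w$). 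So all these numbers lie on the single ray $\R_{>0}\,v$, and Lemma \ref{L1} gives $w\in\cP_n(\{z_0,\dots,z_n\})$ for \emph{every} admissible $(w,v)$.

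It remains to choose $w,v$ so that all roots $z_i$ lie in $A_\alpha=\{e^{i\theta}:|\theta|\le\alpha\}$, and this is where the hypothesis on $\alpha$ enters. Writing $z=e^{i\theta}$, the argument of $\Phi$ is $\Psi(\theta)=n\theta+\psi_w(\theta)$, where $\psi_w(\theta)=2\arctan\!\bigl(\tfrac{1+w}{1-w}\tan\tfrac{\theta}{2}\bigr)$ is the (odd, increasing) argument of $\tfrac{e^{i\theta}-w}{1-we^{i\theta}}$; $\Psi$ is strictly increasing with $\Psi(\theta+2\pi)=\Psi(\theta)+2(n+1)\pi$, and the roots $z_i$ are exactly the $e^{i\theta}$ with $\Psi(\theta)\equiv\arg v\pmod{2\pi}$. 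Over $|\theta|\le\alpha$ the increase of $\Psi$ is $\Psi(\alpha)-\Psi(-\alpha)=2\bigl(n\alpha+\psi_w(\alpha)\bigr)$. Now $\psi_w(\alpha)\to\pi$ as $w\to1^-$ (as $\tan\tfrac{\alpha}{2}>0$ for $\alpha\in(0,\pi)$), and $n(\pi-\alpha)<\pi$ precisely because $\alpha>\tfrac{n-1}{n}\pi$; so one may fix $w\in[0,1)$ with $\psi_w(\alpha)>n(\pi-\alpha)$. Then $\Psi$ increases by more than $2n\pi$ over $[-\alpha,\alpha]$, and by less than $2(n+1)\pi$ (since $\alpha<\pi$), so $\Psi([-\alpha,\alpha])$ is an interval of length in $(2n\pi,2(n+1)\pi)$; for a suitable choice of $\arg v$ this interval contains exactly $n+1$ consecutive points of $\arg v+2\pi\Z$, and for that $v$ all $n+1$ roots $z_i$ lie in $A_\alpha$. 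Combining the three steps completes the argument.

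The main obstacle, in my view, is identifying the right family of examples. The naive guess --- equally spaced points with one larger gap, which does work for $n=2$ (acute inscribed triangles, with $w$ the orthocenter) --- turns out to be $n$-polynomially convex for $n\ge 3$ except at a single spacing, so one is forced to the Blaschke/winding picture above. Once that picture is in place, the hull computation is a one-line consequence of the sparse shape of $R$, and the constant $\tfrac{n-1}{n}\pi$ drops out exactly as the threshold where $\psi_w(\alpha)\to\pi$ beats $n(\pi-\alpha)$.
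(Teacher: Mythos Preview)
Your argument is correct and complete. The Blaschke-product construction is sound: $R$ is monic of degree $n+1$ with all roots on $S^1$ and simple; the identity $(z_i-w)R'(z_i)=v\bigl(n|z_i-w|^2+1-w^2\bigr)$ follows exactly as you compute, so Lemma~\ref{L1} puts $w$ in the hull for every $(w,v)$; and the winding count via $\Psi(\theta)=n\theta+\psi_w(\theta)$ gives the threshold $\psi_w(\alpha)>n(\pi-\alpha)$, which is attainable precisely because $\alpha>\tfrac{n-1}{n}\pi$ forces $n(\pi-\alpha)<\pi=\lim_{w\to1^-}\psi_w(\alpha)$.

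The paper's route is genuinely different and much less explicit. It relies on the preceding example, which observes (by a continuity/deformation argument from the equally spaced configuration) that non-$n$-polynomially-convex $(n+1)$-tuples on $S^1$ exist with the largest gap arbitrarily close to $2\pi/n$; since the complement of $A_\alpha$ has length $2(\pi-\alpha)<2\pi/n$, one can rotate such a configuration to sit inside $A_\alpha$. Your approach trades this soft existence argument for an explicit one-parameter family of witnesses: you produce, for each $w\in[0,1)$ near $1$, a concrete polynomial $R$ whose roots are the configuration and for which the Lemma~\ref{L1} condition is verified by a one-line algebraic identity rather than by the angle bookkeeping of Proposition~\ref{Pangle}. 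What the paper's approach buys is brevity once the surrounding material is in place; what yours buys is a self-contained construction that does not depend on Proposition~\ref{Pangle} or the preceding example, and which identifies the witness point $w$ explicitly (any real $w$ close to $1$) rather than merely asserting its existence.
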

\begin{proof} We see from the above example, that if $\alpha<2\pi/n$, we can always find a set of $n+1$ points $S\subset A_\alpha$ so that some point near the unit circle is in the $n$-polynomial hull of $S$ and thus of $A_\alpha$. 
\end{proof}

\begin{remark} We are not able to show that $A_\alpha=\{z\in S^1;\ -\alpha\le \arg z\le \alpha\}$, $\alpha\le \frac{n-1}{n}\pi$, then $A_\alpha$ is actually $n$-polynomially convex. It is not hard to see that $A_\alpha=\{z\in S^1;\ -\alpha\le \arg z\le \alpha\}$, $\alpha<\frac{\pi}{4}$, is $2$-polynomially convex. To see this, let $|\phi|\le \alpha$ and we take the polynomial $p(x)=z^2-2e^{i\phi}z+e^{2i\phi}(a^2+1)$. We can see that $| p(z) |_A< | p(re^{i\phi})|$ for $0\le r<1$, as long as $a>0$ is large enough.  So any point in the angular sector $\{|z|<1;\ -\alpha \le \arg z\le \alpha\}$ can be separated from $A$ by a quadratic polynomial.
\end{remark}

It would be nice to be able to characterise those compact  sets $A$ in $\C$ (or $\C^n$), for which there exists an $n$, so that $\mathcal P(A)=\mathcal P_n(A)$. We call such sets \emph{finitely polynomially convex}. For example.

\begin{question} Let $\gamma$ be a smooth simple Jordan arc in $\C$. Is  $\gamma$ finitely polynomially convex?
\end{question}

We are certainly not able to show finite polynomial convexity of sets by finding lemniscates of fixed degree, uniformly approximating the arc. To achieve this, we would need polynomials of larger and larger degrees. In this direction, there is a large body of research on the improvements of Hilbert lemniscate theorem. See for example \cite{A}. 

If we restrict ourselves to rectifiable simple Jordan arcs, the answer to above question is no, as it follows from the next simple example.

\begin{example} Let  $K_n$, $n=1,2,\ldots$,  be a circle with radius $\frac{1}{2n^2}$, centred at $(1/n,0)$, with open circular arcs of angle $\pi/(n+1)$ removed at each side. Let $$K=\bigcup_n K_n\cup \bigcup_n(\gamma^-_n\cup \gamma_n^+)\cup\{(0,0)\},$$ where 
$\gamma_n^\pm$ are two arcs connecting  sets $K_n$ and $K_{n+1}$ at their boundary points (see Figure \ref{F3}). 

\begin{figure}[H]
\begin{center}{\begin{tikzpicture}[scale=5]
  \tkzDefPoint(1,0){O1}
    \tkzDefPoint(1+ cosd(45)/8,sind(45)/8){A1}
  \tkzDefPoint(1+cosd(135)/8,sind(135)/8){B1}
  \tkzDrawArc(O1,A1)(B1)
    \tkzDefPoint(1+ cosd(225)/8,sind(225)/8){B11}
  \tkzDefPoint(1+cosd(315)/8,sind(315)/8){A11}
  \tkzDrawArc(O1,B11)(A11)
\tkzDefPoint(1/2,0){O2}
    \tkzDefPoint(1/2+ cosd(30)/18,sind(30)/18){A2}
  \tkzDefPoint(1/2+cosd(150)/18,sind(150)/18){B2}
  \tkzDrawArc(O2,A2)(B2)
    \tkzDefPoint(1/2+ cosd(210)/18,sind(210)/18){B22}
  \tkzDefPoint(1/2+cosd(330)/18,sind(330)/18){A22}
  \tkzDrawArc(O2,B22)(A22)
  
  \tkzDefPoint(1/3,0){O3}
    \tkzDefPoint(1/3+ cosd(22.5)/32,sind(22.5)/32){A3}
  \tkzDefPoint(1/3+cosd(157.5)/32,sind(157.5)/32){B3}
  \tkzDrawArc(O3,A3)(B3)
    \tkzDefPoint(1/3+ cosd(200.5)/32,sind(200.5)/32){B33}
  \tkzDefPoint(1/3+cosd(337.5)/32,sind(337.5)/32){A33}
  \tkzDrawArc(O3,B33)(A33)

\tkzDefPoint(1/4,0){O4}
   \tkzDefPoint(1/4+ cosd(18)/50,sind(18)/50){A4}
 \tkzDefPoint(1/4+cosd(162)/50,sind(162)/50){B4}
 \tkzDrawArc(O4,A4)(B4)
   \tkzDefPoint(1/4+ cosd(198)/50,sind(198)/50){B44}
 \tkzDefPoint(1/4+cosd(342)/50,sind(342)/50){A44}
 \tkzDrawArc(O4,B44)(A44)
 
 \tkzDefPoint(1/5,0){O5}
   \tkzDefPoint(1/5+ cosd(15)/72,sind(15)/72){A5}
 \tkzDefPoint(1/5+cosd(165)/72,sind(165)/72){B5}
 \tkzDrawArc(O5,A5)(B5)
   \tkzDefPoint(1/5+ cosd(195)/72,sind(195)/72){B55}
 \tkzDefPoint(1/5+cosd(345)/72,sind(345)/72){A55}
 \tkzDrawArc(O5,B55)(A55)
 
 \tkzDefPoint(1/6,0){O6}
   \tkzDefPoint(1/6+ cosd(12.86)/98,sind(12.86)/98){A6}
 \tkzDefPoint(1/6+cosd(167.14)/98,sind(167.14)/98){B6}
 \tkzDrawArc(O6,A6)(B6)
   \tkzDefPoint(1/6+ cosd(192.86)/98,sind(192.86)/98){B66}
 \tkzDefPoint(1/6+cosd(347.14)/98,sind(347.14)/98){A66}
 \tkzDrawArc(O6,B66)(A66)

 \tkzDrawSegment(B1,A2)
\tkzDrawSegment(B11,A22)
 \tkzDrawSegment(B2,A3)
\tkzDrawSegment(B22,A33)
 \tkzDrawSegment(B3,A4)
\tkzDrawSegment(B33,A44)
\tkzDrawSegment(B4,A5)
\tkzDrawSegment(B44,A55)
\tkzDrawSegment(B5,A6)
\tkzDrawSegment(B55,A66)

\tkzDefPoint(0,0){O}
\tkzDrawPoints(O)

\tkzDefPoint(1/7,0){A}
\tkzDrawSegment[dotted](A,O)

    \end{tikzpicture}}
  \end{center}
  \caption{}
  \label{F3}
  \end{figure}

\noindent The arcs $\gamma_n^\pm$ can be made so that $K$ is smooth except at the point $(0,0)$. Since $K_n$ contains equally spread $2n+2$ points, the point $(1/n,0)$ is in the $(2n+1)$-polynomial hull of $K$, and so $K$ is not finitely polynomially convex.  
\end{example} 

\begin{remark} Let $A\subset S^1$ be a compact set such that $\overline A=A$ ($A$ is symmetric across $x$ axis).  If $0$ is in the $n$-polynomial hull of $A$, then for any polynomial $p(z)=a_nz^n+\cdots+a_1z+1$, we have $||p||_A\ge 1=|p(0)|$ and so $||z^np(1/z)||_A=||z^n+a_1z^{n-1}+\cdots+a_n||_A\ge 1.$ So among all the monic polynomials of degree $n$, the polynomial $z^n$ has the smallest sup norm on $A$ and is thus the (unique) Chebyshev polynomial for $A$. An example of such a set is $A_\alpha=\{z\in S^1;\ -\alpha\le \arg z\le \alpha\}$, with  $\alpha>\frac{n}{n+1}\pi,$ but we could take any compact $A=\overline A\subset S^1$ that contains a set of $n+1$ equally spread points. With a different proof, this is a well known result from the theory of Chebyshev polynomials \cite[Theorem 3]{TD}. \end{remark}

\noindent {\bf Acknowledgments.} I express my gratitude to Josip Globevnik for engaging in insightful discussions on the topic, and to Thomas Mark for not only identifying the problem of $2$-polynomial convexity of three points in $\C$, but also providing a solution to the problem (personal communication).
\bibliographystyle{amsart}

\end{document}